\theoremstyle{plain}
\newtheorem{theorem}{Theorem}[section]
\newtheorem{thm}[theorem]{Theorem}
\newtheorem{prop}[theorem]{Proposition}
\newtheorem*{theorem*}{Theorem}
\theoremstyle{definition}
\newtheorem{defn}[theorem]{Definition}
\newtheorem{notation}[theorem]{Notation}
\renewcommand{\O}{{\mathcal{O}}}
\newcommand{\upchi}{{\raise.35ex\hbox{\ensuremath{\chi}}}}
\newcommand{\spn}{\operatorname{span}}
\newcommand{\ca}{\mathrm{C}^*}
\newcommand{\mt}{\varnothing}
\newcommand{\Per}{{\rm Per}}
\begin{document}
%%%%%%%%%%%%%%%%%%%%%%%%%%%%%%%%%%%%%%
\title[]{The Jacobson Topology of The Primitive Ideal Space of Self-Similar $k$-Graph C*-Algebras}
\author[]{Hui Li}
\address{Hui Li, Department of Mathematics and Physics, North China Electric Power University, Beijing 102206, China}
\email{lihui8605@hotmail.com}

\thanks{The author was supported by National Natural Science Foundation of China (Grant No.~11801176) and by Fundamental Research Funds for the Central Universities (Grant No.~2020MS040).}

\begin{abstract}
We describe the Jacobson topology of the primitive ideal space of self-similar $k$-graph C*-algebras under certain conditions.
\end{abstract}

\subjclass[2010]{46L05}
\keywords{C*-algebra, self-similar $k$-graph, primitive ideal, Jacobson topology}

%\date{}
\maketitle
%%%%%%%%%%%%%%%%%%%%%%%%%%%%%%%%%%%%%%

%\section{Introduction}

%Denote by $\mathbb{N}$ the set of all nonnegative integers, and denote by $\mathbb{N}^+$ the set of all positive integers. For any $k \in \mathbb{N}^+ \amalg \{\infty\}, p,q \in \mathbb{N}^k,z \in \prod_{i=1}^{k}\mathbb{T}$, denote by $\{e_i\}_{i=1}^{k}$ the standard basis of $\mathbb{N}^k$, by $p \lor q$ (resp.~$p \wedge q$) the coordinatewise maximum (resp.~minimum) of $p$ and $q$, and by $z^n:=\prod_{i=1}^{k}z_i^{n_i}$. %Define $p \leq q$ (resp. $p \geq q$) if $p_i \leq q_i$ (resp. $p_i \geq q_i$) for all $1 \leq i \leq k$, and define $p < q$ (resp. $p > q$) if $p_i < q_i$ (resp. $p_i > q_i$) for all $1 \leq i \leq k$.

\section{Self-similar $k$-graph C*-algebras}

%\subsection{Groupoid C*-algebras}

%In this subsection we review the definition of groupoid C*-algebras from \cite{Ren80}.

%An \emph{ample groupoid} is a Hausdorff topological groupoid with an open base consisting of compact open bisections.

%Let $\Gamma$ be an ample groupoid. For $f,g \in C_c(\Gamma)$, define
%\[
%(f* g) (\gamma):=\sum_{s(\beta)=r(\gamma)}f(\beta^{-1})g(\beta\gamma); \text{ and } f^*(\gamma):=\overline{f(\gamma^{-1})}.
%\]
%Then $C_c(\Gamma)$ is a $*$-algebra. Define
%\[
%\Vert f \Vert:=\sup\{\Vert \pi(f) \Vert: \pi \text{ is a $*$-representation of $C_c(\Gamma)$}\}.
%\]
%Then $\Vert\cdot\Vert$ is a C*-norm on $C_c(\Gamma)$, and the completion of $C_c(\Gamma)$ under this norm is called the \emph{full groupoid C*-algebra}, denoted by $\ca(\Gamma)$.

%Let $\Gamma$ be an ample groupoid. For $u \in \Gamma^0$, define $\Gamma^u:=r^{-1}(u)$, define $\Gamma_u:=s^{-1}(u)$, and define the \emph{isotropy group} at $u$ by $\Gamma_u^u:=r^{-1}(u) \cap s^{-1}(u)$. Then $\Gamma$ is said to be \emph{topologically principal} if the set of units whose isotropy groups are trivial is dense in $\Gamma^{0}$.

%\begin{thm}[{\cite[Theorem~4.4]{Exe11}}]\label{CK thm gpoid}
%Let $\Gamma$ be a topologically principal ample groupoid, and $I$ be a nonzero ideal of $\ca_r(\Gamma)$. Then $C_0(\Gamma^0) \cap I$ is nonzero.
%\end{thm}

In this subsection, we recall the background of $k$-graph C*-algebras and self-similar $k$-graph C*-algebras from \cite{CKSS14, KP00, LY17, LY19}.

\begin{defn}
Let $k$ be a positive integer. A countable small category $\Lambda$ is called a \emph{$k$-graph} if there exists a functor $d:\Lambda \to \mathbb{N}^k$ satisfying that for $\mu\in\Lambda, p,q \in \mathbb{N}^k$ with $d(\mu)=p+q$, there exist unique $\alpha,\beta \in \Lambda$ such that $d(\alpha)=p,d(\beta)=q,s(\alpha)=r(\beta),\mu=\alpha\beta$. The functor $d$ is called the \emph{degree map}. Moreover, a functor between two $k$-graphs is called a \emph{graph morphism} if it preserves the degree maps.
\end{defn}

\begin{notation}
Let $k$ be a positive integer. Define $\Omega_k:=\{(p,q) \in \mathbb{N}^k \times \mathbb{N}^k:p \leq q\}$; define $\Omega_k^0:=\{(p,p):p \in \mathbb{N}^k\}$; for $(p,q), (q,m) \in \Omega_k$, define $r(p,q):=(p,p); s(p,q):=(q,q);(p,q) \cdot (q,m):=(p,m); d(p,q):=q-p$. Then $\Omega_k$ is a $k$-graph.
\end{notation}

\begin{notation}
Let $\Lambda$ be a $k$-graph. For $A,B \subset \Lambda$, denote by $AB:=\{\mu\nu:\mu \in A,\nu\in B,s(\mu)=r(\nu)\}$. For $p \in \mathbb{N}^k$, denote by $\Lambda^p:=d^{-1}(p)$. %For $\mu,\nu \in \Lambda$, define $\Lambda^{\min}(\mu,\nu):=\{(\alpha,\beta) \in \Lambda \times \Lambda:\mu\alpha=\nu\beta,d(\mu\alpha)=d(\mu)\lor d(\nu)\}$.
\end{notation}

\begin{defn}
Let $\Lambda$ be a $k$-graph. A graph morphism from $\Omega_k$ to $\Lambda$ is called an \emph{infinite path}. The set of all infinite paths of $\Lambda$ is denoted by $\Lambda^\infty$. Fix $x \in \Lambda^\infty,n \in \mathbb{N}^k,\mu \in \Lambda x(0,0),(p,q) \in \Omega_k$. Denote by $\sigma^n(x),\mu x\in \Lambda^\infty$ such that $\sigma^n(x)(p,q)=x(p+n,q+n)$ and $(\mu x)(0,d(\mu)+n)=\mu x(0,n)$. Moreover, $x$ is said to be \emph{cofinal} if for any $v \in \Lambda^0$, there exists $m \in \mathbb{N}^k$, such that $v\Lambda x(m,m) \neq \emptyset$.
\end{defn}

\begin{defn}
Let $\Lambda$ be a $k$-graph. Then $\Lambda$ is said to be \emph{row-finite} if $\vert v\Lambda^{p}\vert<\infty$ for all $v \in \Lambda^0$ and $p \in \mathbb{N}^k$. $\Lambda$ is said to be \emph{source-free} if $v\Lambda^{p} \neq \mt$ for all $v \in \Lambda^0$ and $p \in \mathbb{N}^k$.
\end{defn}

\subsection*{Standing Assumptions:} \textsf{Throughout the rest of this paper, all $k$-graphs are assumed to be row-finite and source-free.}

\begin{defn}
Let $\Lambda$ be a $k$-graph. Define the \emph{$k$-graph C*-algebra} $\O_\Lambda$ to be the universal C*-algebra generated by a family of partial isometries $\{s_\lambda:\lambda\in\Lambda\}$ (\emph{Cuntz-Krieger $\Lambda$-family}) satisfying
\begin{enumerate}
\item $\{s_v\}_{v \in \Lambda^0}$ is a family of mutually orthogonal projections;
\item $s_{\mu\nu}=s_{\mu} s_{\nu}$ if $s(\mu)=r(\nu)$;
\item $s_{\mu}^* s_{\mu}=s_{s(\mu)}$ for all $\mu \in \Lambda$; and
\item $s_v=\sum_{\mu \in v \Lambda^{p}}s_\mu s_\mu^*$ for all $v \in \Lambda^0, p \in \mathbb{N}^k$.
\end{enumerate}
%Moreover, define $\D_\Lambda:=\overline{span}\{s_\mu s_\mu^*:\mu\in\Lambda\}$, which is called the \emph{diagonal subalgebra} of $\O_\Lambda$.
\end{defn}

\begin{defn}
Let $\Lambda$ be a $k$-graph. Let $H$ be a subset of $\Lambda^0$. Then $H$ is said to be \emph{hereditary} and \emph{saturated} if
\begin{enumerate}
\item $s(H \Lambda) \subset H$;
\item for any $v \in \Lambda^0$, if there exists $p \in \mathbb{N}^k$ satisfying $s(v \Lambda^p) \subset H$, then $v \in H$.
\end{enumerate}
For any subset $A$ of $\Lambda^0$, denote by $\sum A$ the smallest hereditary and saturated set containing $A$. Let $T$ be a nonempty subset of $\Lambda^0$. Then $T$ is called a \emph{maximal tail} if
\begin{enumerate}
\item for any $v \in T,w \in \Lambda^0$, we have $w \Lambda v \neq \emptyset \implies w \in T$;
\item for any $v \in T, p \in \mathbb{N}^k$, we have $v \Lambda^p T \neq \emptyset$;
\item for any $v_1,v_2 \in T$, there exists $w \in T$ such that $v_1 \Lambda w,v_2 \Lambda w \neq \emptyset$.
\end{enumerate}
\end{defn}

\begin{defn}
Let $\Lambda$ be a $k$-graph, let $G$ be a countable discrete group acting on $\Lambda$, and let $\vert:G\times \Lambda \to G$ be a map. Then $(G,\Lambda)$ is called a \emph{self-similar $k$-graph} if
\begin{enumerate}
\item $G \cdot \Lambda^p \subset \Lambda^p$ for all $p \in \mathbb{N}^k$;
\item $s(g \cdot \mu)=g \cdot s(\mu)$ and $r(g \cdot \mu)=g \cdot r(\mu)$ for all $g \in G,\mu \in \Lambda$;
\item
$g\cdot (\mu\nu)=(g \cdot \mu)(g \vert_\mu \cdot \nu)$ for all $g \in G,\mu,\nu \in \Lambda$ with $s(\mu)=r(\nu)$;

\item
$g \vert_v =g$ for all $g \in G,v \in \Lambda^0$;

\item
$g \vert_{\mu\nu}=g \vert_\mu \vert_\nu$ for all $g \in G,\mu,\nu \in \Lambda$ with $s(\mu)=r(\nu)$;

\item
$1_G \vert_{\mu}=1_G$ for all $\mu \in \Lambda$;

\item
$(gh)\vert_\mu=g \vert_{h \cdot \mu} h \vert_\mu$ for all $g,h \in G,\mu \in \Lambda$.
\end{enumerate}
Furthermore, $(G,\Lambda)$ is said to be \emph{pseudo free} if for any $g \in G,\mu \in \Lambda,g \cdot \mu=\mu,g \vert_\mu=1_G \implies g=1_G$.
\end{defn}

\begin{defn}
Let $(G,\Lambda)$ be a self-similar $k$-graph. Define $\mathcal{O}_{G,\Lambda}^\dagger$ to be the universal unital C*-algebra generated by a Cuntz-Krieger $\Lambda$-family $\{s_\mu:\mu\in\Lambda\}$ and a family of unitaries $\{u_g:g \in G\}$ satisfying
\begin{enumerate}
\item $u_{gh}=u_g u_h$ for all $g,h \in G$;
\item\label{u_g s_mu} $u_g s_\mu=s_{g \cdot \mu} u_{g \vert_\mu}$ for all $g \in G,\mu \in \Lambda$.
\end{enumerate}
Define $\mathcal{O}_{G,\Lambda}:=\overline{\spn}\{s_\mu u_g s_\nu^*:s(\mu)=g \cdot s(\nu)\}$, which is called the \emph{self-similar $k$-graph C*-algebra} of $(G,\Lambda)$.
\end{defn}

\begin{defn}
Let $(G,\Lambda)$ be a self-similar $k$-graph. For any $\mu,\nu \in \Lambda,g \in G$ with $s(\mu)=g \cdot s( \nu)$, if $\mu(g \cdot x)=\nu x$ for all $x \in s(\nu)\Lambda^\infty$, then $(\mu,g,\nu)$ is called a \emph{cycline triple}. Define $\mathrm{Per}_{G,\Lambda}:=\{d(\mu)-d(\nu):(\mu,g,\nu) \text{ is a cycline}$ $\text{triple}\}$. Cycline triples of the form $(\mu,1_G,\nu)$ are simply called \emph{cycline pairs}. Define $\mathrm{Per}_{\Lambda}:=\{d(\mu)-d(\nu):(\mu,1_G,\nu) \text{ is a cycline pair}\}$. Denote by $M(\Lambda)$ the set of all maximal tails of $\Lambda$, by $M_\gamma(\Lambda):=\{T \in M(\Lambda):\Per_{ \Lambda T}=0\}$, and by $M_\tau(\Lambda):=\{T \in M(\Lambda):\Per_{ \Lambda T}\neq 0\}$.
\end{defn}

\begin{prop}
Let $(G,\Lambda)$ be a pseudo free self-similar $k$-graph such that $g\cdot v=v$ for all $g\in G,v\in \Lambda^0$. Let $T$ be a maximal tail of $\Lambda$. Then
\begin{enumerate}
\item there exists a nonempty hereditary subset of $H_T$ of $\Lambda T$ consisting of $v \in T$ satisfying for any $p,q \in \mathbb{N}^k$ with $p-q \in \Per_{\Lambda T}$, for any $\mu \in v\Lambda^pT$, there exists a unique $\nu \in v\Lambda^qT$ such that $(\mu,1_G,\nu)$ is a cycline pair of $(G,\Lambda T)$;
\item $(G,\Lambda T), (G,H_T \Lambda T)$ are pseudo free self-similar $k$-graphs;
\item if every cycline triple of $(G,\Lambda T)$ is a cycline pair of $(G,\Lambda T)$, then every cycline triple of $(G,H_T \Lambda T)$ is a cycline pair of $(G,H_T \Lambda T)$ and $\Per_{G,H_T \Lambda T}=\Per_{H_T \Lambda T}=\Per_{G, \Lambda T}=\Per_{\Lambda T}$ is a subgroup of $\mathbb{Z}^k$.
\end{enumerate}
\end{prop}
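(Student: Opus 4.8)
The plan is to handle the three parts in turn, after one simplifying observation: since $G$ fixes every vertex and cycline pairs carry $g=1_G$, a cycline pair $(\mu,1_G,\nu)$ of $(G,\Lambda T)$ is a purely $k$-graph-theoretic object, namely $s(\mu)=s(\nu)$ together with $\mu x=\nu x$ for every $x\in s(\nu)(\Lambda T)^\infty$. Thus $\Per_{\Lambda T}$ is the ordinary periodicity set of the $k$-graph $\Lambda T$, and the uniqueness asserted in (i) is automatic: if $(\mu,1_G,\nu_1)$ and $(\mu,1_G,\nu_2)$ are both cycline with $d(\nu_1)=d(\nu_2)=q$, then $s(\nu_1)=s(\mu)=s(\nu_2)$ and $\nu_1 x=\mu x=\nu_2 x$, so reading off the initial segment of degree $q$ forces $\nu_1=\nu_2$. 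Only the existence clause has content, so I define $H_T$ to be the set of $v\in T$ for which, for all $p,q$ with $p-q\in\Per_{\Lambda T}$, every $\mu\in v\Lambda^pT$ admits a cycline partner $\nu\in v\Lambda^qT$.

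For hereditarity I would argue by left cancellation. If $v\in H_T$ and $\lambda\in v(\Lambda T)$ with $s(\lambda)=v'$, then given $p-q\in\Per_{\Lambda T}$ and $\mu'\in v'\Lambda^pT$, the path $\lambda\mu'\in v\Lambda^{d(\lambda)+p}T$ has a cycline partner $\eta\in v\Lambda^{d(\lambda)+q}T$; comparing initial segments of degree $d(\lambda)$ shows $\eta=\lambda\eta_2$ with $d(\eta_2)=q$, and applying $\sigma^{d(\lambda)}$ to $\lambda\mu' x=\lambda\eta_2 x$ gives $\mu' x=\eta_2 x$ for all $x$, so $\eta_2$ is the required partner and $v'\in H_T$. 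The genuinely hard point is nonemptiness. When $\Per_{\Lambda T}=\{0\}$ every vertex qualifies (the partner of $\mu$ is $\mu$), so the issue is the periodic case. Here I would realize a generating set of $\langle\Per_{\Lambda T}\rangle$ by finitely many cycline pairs, use the directedness axiom of the maximal tail to slide them onto a common source, and then extract a vertex at which no aperiodic path survives, i.e.\ at which every path of the relevant degrees is forced to have a partner. This amounts to locating the periodic core of the cofinal $k$-graph $\Lambda T$, and eliminating the aperiodic entries (via row-finiteness and propagation of periodicity along cofinal paths, in the spirit of the periodicity dichotomy for cofinal $k$-graphs) is the principal obstacle.

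Parts (ii) and (iii) then proceed more routinely. Because $G$ fixes vertices it preserves $\Lambda T=\{\mu:s(\mu)\in T\}$ and $H_T\Lambda T=\{\mu:r(\mu)\in H_T\}$; for the latter, hereditarity forces $s(\mu)\in H_T$ too, so $H_T\Lambda T$ is a sub-$k$-graph on vertex set $H_T$. Both are row-finite, and both are source-free: for $\Lambda T$ this is exactly the extension condition $v\Lambda^pT\neq\emptyset$ in the maximal-tail definition, and for $H_T\Lambda T$ one uses that any path out of an $H_T$-vertex remains in $H_T$. All self-similarity axioms and pseudo-freeness are inherited by restriction, giving (ii). For (iii) the key point is that an infinite path with range in $H_T$ stays in $H_T$, so $s(\nu)(H_T\Lambda T)^\infty=s(\nu)(\Lambda T)^\infty$ whenever $s(\nu)\in H_T$; consequently a cycline triple of $(G,H_T\Lambda T)$ is precisely a cycline triple of $(G,\Lambda T)$ both of whose legs have range in $H_T$. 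The hypothesis that every cycline triple of $(G,\Lambda T)$ is a cycline pair therefore transfers verbatim, so every cycline triple of $(G,H_T\Lambda T)$ is a cycline pair, whence $\Per_{G,H_T\Lambda T}=\Per_{H_T\Lambda T}$ and $\Per_{G,\Lambda T}=\Per_{\Lambda T}$.

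It remains to equate these and prove the subgroup property. The inclusion $\Per_{H_T\Lambda T}\subseteq\Per_{\Lambda T}$ is immediate, and the reverse follows by realizing each $p-q\in\Per_{\Lambda T}$ inside $H_T$: pick $v\in H_T$, choose $\mu\in v\Lambda^pT$ (nonempty by the extension axiom), and take its partner $\nu\in v\Lambda^qT$ from the defining property of $H_T$, noting both legs have range $v\in H_T$. Finally the subgroup property comes from a composition argument I expect to be clean: given $p_1-q_1,p_2-q_2\in\Per_{\Lambda T}$, fix $v\in H_T$, take $\mu_1\in v\Lambda^{p_1}T$ with partner $\nu_1\in v\Lambda^{q_1}T$, set $v_1=s(\mu_1)=s(\nu_1)\in H_T$, and take $\mu_2\in v_1\Lambda^{p_2}T$ with partner $\nu_2\in v_1\Lambda^{q_2}T$. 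For any admissible $z$ the path $\mu_2 z=\nu_2 z$ issues from $v_1$, and since $\mu_1 y=\nu_1 y$ for all $y\in v_1\Lambda^\infty$ we get $\mu_1\mu_2 z=\nu_1\nu_2 z$, so $(\mu_1\mu_2,1_G,\nu_1\nu_2)$ is a cycline pair realizing $(p_1-q_1)+(p_2-q_2)$. As $\Per_{\Lambda T}$ already contains $0$ and is symmetric, this closure under addition makes it a subgroup of $\mathbb{Z}^k$, completing (iii).
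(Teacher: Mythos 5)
The paper states this proposition without any proof (it is imported from the periodicity analysis of \cite{CKSS14} and \cite{LY19}), so there is no in-paper argument to compare against line by line; your proposal has to stand on its own. Most of it does: the observation that a cycline pair is a purely graph-theoretic object once $G$ fixes vertices, the automatic uniqueness of the partner $\nu$, the hereditarity of $H_T$ via the factorization property and the shift, the restriction arguments giving (ii) (including the point that hereditarity makes $H_T\Lambda T$ a source-free sub-$k$-graph and that pseudo-freeness passes to subfamilies), the identification $s(\nu)(H_T\Lambda T)^\infty=s(\nu)(\Lambda T)^\infty$ which transfers the cycline-triple hypothesis in (iii), and the composition/symmetry argument for the subgroup property are all correct and are essentially the standard route.

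The genuine gap is the one you flag yourself and then leave open: the nonemptiness of $H_T$ when $\Per_{\Lambda T}\neq 0$. This is not a technical footnote --- it is the entire content of part (i), the higher-rank analogue of locating the cycle without entrance in a $1$-graph. Your sketch ("realize a generating set by finitely many cycline pairs, slide them onto a common source, extract a vertex at which no aperiodic path survives") does not explain why such a vertex exists; a priori every vertex of $T$ could emit some path of degree $p$ with no degree-$q$ companion even though some cycline pair realizes $p-q$ somewhere in $\Lambda T$. The known proof (see \cite[Section~4]{CKSS14}) requires the structure theory of periodic cofinal $k$-graphs: one shows that periodicity at one vertex propagates along the graph so that $\sigma^p x=\sigma^q x$ for all infinite paths based at the vertices of a suitable hereditary set, and this uses cofinality, row-finiteness, and the factorization property in an essential, nontrivial way. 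Until that step is supplied, part (i) is unproven, and parts (ii)--(iii), which quantify over $H_T$, are vacuous precisely in the periodic case where they are needed.
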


%\begin{defn}
%Let $(G,\Lambda)$ be a pseudo free self-similar $k$-graph satisfying
%\begin{enumerate}
%\item $g\cdot v=v$ for all $g\in G,v\in \Lambda^0$;
%\item every cycline triple of $(G,\Lambda T)$ is a cycline pair of $(G,\Lambda T)$ for any maximal tail $T$ of $\Lambda$.
%\end{enumerate}
%Let $x \in (\Lambda T)^\infty$. Then $x$ is said to be \emph{cofinal} in $\Lambda T$ if for any $v \in T$, there exists $p \in \mathbb{N}^k$, such that $v\Lambda x(p,p) \neq \emptyset$. Denote by $[x]:=\{y \in (\Lambda T)^\infty:\exists p,q \in \mathbb{N}^k,\exists g \in G,s.t. \sigma^{p}(x)=g \cdot \sigma^q(y)\}$
%\end{defn}

\begin{thm}\label{characterization of prim ideal of O_G,Lambda}
Let $(G,\Lambda)$ be a pseudo free self-similar $k$-graph satisfying
\begin{enumerate}
\item $g\cdot v=v$ for all $g\in G,v\in \Lambda^0$;
\item every cycline triple of $(G,\Lambda T)$ is a cycline pair of $(G,\Lambda T)$ for any maximal tail $T$ of $\Lambda$.
\end{enumerate}
Then there exists a bijection from $\amalg_{T \in M(\Lambda)} \{T\} \times \widehat{\Per}_{\Lambda T}$ onto $\mathrm{Prim}(\mathcal{O}_{G,\Lambda})$. For any maximal tail $T$ of $\Lambda$ and for any $f \in \widehat{\Per}_{\Lambda T}$, denote by $I_{T,f}$ the image of $(T,f)$. Find an arbitrary cofinal infinite path of $\Lambda T$ and an arbitrary $\widetilde{f} \in \mathbb{Z}^k$ extending $f$, denote by $[x]:=\{y \in (\Lambda T)^\infty:\exists \ p,q \in \mathbb{N}^k,g \in G,s.t. \ \sigma^p(x)=g \cdot \sigma^q(y)\}$. Then there is an irreducible representation $\pi_{\widetilde{f},x}:\mathcal{O}_{G,\Lambda} \to B(l^2([x]))$ such that for any $\mu \in \Lambda,g \in G,y \in [x]$, we have $\pi_{\widetilde{f},x}(s_\mu)\delta_y=\begin{cases}
   \widetilde{f}(d(\mu))\delta_{\mu y}   &\text{ if $s(\mu)=y(0,0)$} \\
   0   &\text{ otherwise }\end{cases},\pi(u_g)\delta_y=\delta_{g \cdot y}$. Moreover, $I_{T,f}=\ker(\pi_{\widetilde{f},x})$. In particular, $s_v \in I_{T,f}$ if and only if $v \notin T$.
\end{thm}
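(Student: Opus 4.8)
The plan is to establish the four assertions separately: that the displayed formulas define a representation $\pi_{\widetilde{f},x}$, that it is irreducible, that $(T,f)\mapsto I_{T,f}:=\ker(\pi_{\widetilde{f},x})$ is a well-defined bijection onto $\mathrm{Prim}(\mathcal{O}_{G,\Lambda})$, and finally that $s_v\in I_{T,f}$ if and only if $v\notin T$. First I would check that $[x]$ is invariant under prepending morphisms, under the shifts $\sigma^n$, and under the $G$-action; each is immediate from the definition of $[x]$ together with $G\cdot\Lambda^p\subset\Lambda^p$, and shows that the operators $\pi_{\widetilde{f},x}(s_\mu)$ and $\pi_{\widetilde{f},x}(u_g)$ genuinely map $\ell^2([x])$ into itself. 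Verifying the Cuntz--Krieger relations and the covariance relation $u_g s_\mu=s_{g\cdot\mu}u_{g\vert_\mu}$ is then a direct computation on basis vectors; the only places the twist matters are that $d(g\cdot\mu)=d(\mu)$ keeps the scalar $\widetilde{f}(d(\mu))$ stable under the $G$-action, and that multiplicativity of $\widetilde{f}$ gives $\pi_{\widetilde{f},x}(s_{\mu\nu})=\pi_{\widetilde{f},x}(s_\mu)\pi_{\widetilde{f},x}(s_\nu)$. Universality of $\mathcal{O}_{G,\Lambda}$ then yields $\pi_{\widetilde{f},x}$.

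For irreducibility I would argue that the $*$-algebra generated by the $\pi_{\widetilde{f},x}(s_\mu u_g s_\nu^*)$ moves any basis vector $\delta_y$ to a scalar multiple of any other $\delta_{y'}$ with $y'\in[x]$: by definition of $[x]$ one has $\sigma^p(y)=g\cdot\sigma^q(y')$ for suitable $p,q,g$, and cofinality of $x$ together with the maximal-tail axioms lets one realize this identification by a product of the generating operators. Hence the commutant consists of scalars, $\pi_{\widetilde{f},x}$ is irreducible, and $\ker(\pi_{\widetilde{f},x})$ is primitive.

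The substance of the theorem is well-definedness and bijectivity. Independence of the cofinal path is the mild part: cofinality and the maximal-tail axioms force all cofinal infinite paths of $\Lambda T$ to lie in a single class, so $[x]$, and hence $\pi_{\widetilde{f},x}$, does not depend on $x$. The delicate point is independence of the extension. If two characters $\widetilde{f},\widetilde{f}'$ of $\mathbb{Z}^k$ both restrict to $f$ on $\Per_{\Lambda T}$, then $z:=\widetilde{f}'\,\overline{\widetilde{f}}$ is trivial on $\Per_{\Lambda T}$, and I would construct a diagonal unitary $W\delta_y=c_y\delta_y$ on $\ell^2([x])$ intertwining $\pi_{\widetilde{f}',x}$ and $\pi_{\widetilde{f},x}$. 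The phases $c_y$ are defined by transporting a base value along paths and group elements; the only obstruction to consistency is a $z$-valued cocycle around the cycline relations, and this vanishes precisely because $z$ kills $\Per_{\Lambda T}$. This is exactly where condition (2) and the identity $\Per_{G,\Lambda T}=\Per_{\Lambda T}$ from the preceding Proposition enter, since they guarantee that the only coincidences $\mu(g\cdot y)=\nu y$ producing loops come from cycline pairs with degree differences in $\Per_{\Lambda T}$. Thus $I_{T,f}$ depends only on $(T,f)$.

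Injectivity then splits: distinct maximal tails are separated by the last assertion, since $v\in T\setminus T'$ gives $s_v\in I_{T',f'}\setminus I_{T,f}$, while for a fixed $T$ the character $f$ is recovered from the scalars $f(d(\mu)-d(\nu))$ by which $\pi_{\widetilde{f},x}$ scales the cycline operators $s_\mu s_\nu^*$ with $d(\mu)-d(\nu)\in\Per_{\Lambda T}$, so $f\neq f'$ forces $\ker(\pi_{\widetilde{f},x})\neq\ker(\pi_{\widetilde{f}',x})$. Surjectivity is the main obstacle. Given $P\in\mathrm{Prim}(\mathcal{O}_{G,\Lambda})$ I would set $T:=\{v:s_v\notin P\}$ and first check, from heredity, saturation and $s_v=\sum_{\mu\in v\Lambda^p}s_\mu s_\mu^*$, that $T$ is a maximal tail. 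The hard step is to show $P$ comes from a single $f\in\widehat{\Per}_{\Lambda T}$: using the known correspondence between gauge-invariant ideals and hereditary saturated sets I would quotient out the ideal attached to $\Lambda^0\setminus T$ and, via the preceding Proposition, replace the system by the transitive one $(G,H_T\Lambda T)$ whose periodicity is the constant group $\Per_{\Lambda T}$; there the cycline subalgebra $\mathcal{M}:=\overline{\spn}\{s_\mu u_g s_\nu^*:(\mu,g,\nu)\text{ cycline}\}$ contains, in a full corner, a central copy of $C(\widehat{\Per}_{\Lambda T})$ on which $P$ selects one point $f$, and matching with the construction gives $P=I_{T,f}$. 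Finally the last assertion follows because $\pi_{\widetilde{f},x}(s_v)$ is the projection onto $\overline{\spn}\{\delta_y:y(0,0)=v\}$, which is nonzero exactly when some $y\in[x]$ starts at $v$; cofinality of $x$ and the maximal-tail axioms provide such a path if and only if $v\in T$.
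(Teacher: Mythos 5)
There is nothing in this paper to compare your attempt against: Theorem~\ref{characterization of prim ideal of O_G,Lambda} is stated without proof, as imported background. The citations in the proof of Theorem~\ref{Jacob top} (notably \cite[Corollary~5.6]{LY19}, \cite[Lemma~4.3, Theorem~4.5]{LY19}, \cite[Theorem~3.19]{LY19}) show that the bijection, the ideals $I_{T,f}$, and the representations $\pi_{\widetilde f,x}$ are all taken from the companion paper \cite{LY19}, which in turn adapts the $k$-graph argument of \cite{CKSS14}. So the honest answer is that your proposal is being measured against a proof that lives in a different paper. With that caveat, your outline does follow the standard route of \cite{CKSS14, LY19}: construct $\pi_{\widetilde f,x}$ on $\ell^2([x])$, show independence of the choices of $x$ and of the extension $\widetilde f$ via a diagonal intertwining unitary whose consistency is governed by $\Per_{\Lambda T}$, separate tails by the vertex projections, and obtain surjectivity by cutting down a primitive ideal to the tail $T=\{v:s_v\notin P\}$ and analysing the cycline subalgebra of $(G,H_T\Lambda T)$.

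Two steps of your sketch are too thin to stand as written. For irreducibility, the fact that the image algebra moves each $\delta_y$ to a scalar multiple of each $\delta_{y'}$ does not by itself force the commutant to be trivial; you also need that each minimal diagonal projection onto $\bC\delta_y$ lies in the weak closure of the image (which holds because distinct infinite paths are separated by cylinder sets, so the projections $\pi_{\widetilde f,x}(s_\mu s_\mu^*)$ generate enough of the diagonal), and only then does transitivity pin an operator in the commutant down to a scalar. For surjectivity --- which is the real content of the theorem --- the phrase ``the cycline subalgebra contains, in a full corner, a central copy of $C(\widehat{\Per}_{\Lambda T})$ on which $P$ selects one point'' compresses the hardest part of \cite{LY19}: one must know that $\M$ is contained in the centre-like part of the corner $\mathcal{O}_{G,H_T\Lambda T}$, that the quotient by the gauge-invariant ideal behaves as claimed, and that hypothesis (ii) (every cycline triple is a cycline pair) is exactly what makes $\Per_{G,\Lambda T}=\Per_{\Lambda T}$ a group and the corner decompose over $\widehat{\Per}_{\Lambda T}$. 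Without these structure results (the preceding Proposition and \cite[Sections~4--5]{LY19}) the step from ``$P$ restricted to the corner'' to ``$P=I_{T,f}$ for a single character $f$'' is not justified. As a roadmap your proposal is faithful to the literature; as a proof it defers the two decisive points to the same external results the paper itself relies on.
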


\section{The Jacobson Topology}

In this section, we describe the Jacobson topology of the primitive ideal space of self-similar $k$-graph C*-algebras under certain conditions. Our approach is inspired by \cite{HS04} and \cite{MR3837593}.

\begin{thm}\label{Jacob top}
Let $(G,\Lambda)$ be a pseudo free self-similar $k$-graph satisfying
\begin{enumerate}
\item\label{sss} $g\cdot v=v$ for all $g\in G,v\in \Lambda^0$;
\item every cycline triple of $(G,\Lambda T)$ is a cycline pair of $(G,\Lambda T)$ for any maximal tail $T$ of $\Lambda$;
\item for any $T \in M_\tau(\Lambda),H_T \Lambda T$ is strongly connected;
\item for any maximal tail $T$ of $\Lambda$ and for any $f \in \widehat{\Per}_{G,\Lambda T}, I_{T,f}$ is the closed two-sided ideal of $\mathcal{O}_{G,\Lambda}$ generated by $\{s_v\}_{v \notin T}$ and $\{s_{\mu}-f(d(\mu)-d(\nu))s_\nu:(\mu,1_G,\nu) \text{ is a cycline pair of }(G,H_{T}\Lambda T)\}$.
\end{enumerate}
Let $W$ be a nonempty subset of $M_\gamma(\Lambda)$, let $Y$ be a nonempty subset of $M_\tau(\Lambda)$, let $D(T)$ be a nonempty subset of $\widehat{\Per}_{G, \Lambda T}$ for each $T \in Y$, let $S_0 \in M_\gamma(\Lambda)$, let $T_0 \in M_\tau(\Lambda)$, and let $f_0 \in \widehat{\Per}_{G,\Lambda T_0}$. Then
\begin{enumerate}
\item\label{S in W} $I_{S_0,1} \in \overline{\{I_{S,1}:S \in W\}}$ if and only if $S_0 \subset \bigcup_{S \in W}S$;
\item\label{T in W} $I_{T_0,f_0} \in \overline{\{I_{S,1}:S \in W\}}$ if and only if $T_0 \subset \bigcup_{S \in W}S$;
\item\label{S in Y} $I_{S_0,1} \in \overline{\{I_{T,f}:T \in Y,f \in D(T)\}}$ if and only if $S_0 \subset \bigcup_{T \in Y}T$;
\item\label{T in Y} $I_{T_0,f_0} \in \overline{\{I_{T,f}:T \in Y,f \in D(T)\}}$ if and only if either there exists $T_0 \neq T \in Y$ such that $T_0 \subset T$ or there exist no $T_0 \neq T \in Y$ such that $T_0 \subset T$,$T_0 \in Y$, $f_0 \in \overline{D(T_0)}$.
\end{enumerate}
\end{thm}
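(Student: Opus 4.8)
The plan is to reduce every statement to an ideal-containment question via the standard description of the Jacobson (hull--kernel) topology: for a family $F\subset\mathrm{Prim}(\mathcal{O}_{G,\Lambda})$ and a primitive ideal $I$, one has $I\in\overline{F}$ if and only if $\bigcap_{J\in F}J\subseteq I$. Thus each of (\ref{S in W})--(\ref{T in Y}) becomes a containment of the form $\bigcap_{J\in F}J\subseteq I_{S_0,1}$ or $\subseteq I_{T_0,f_0}$, which, writing $I_{T,f}=\ker\pi_{\widetilde{f},x}$ as in Theorem~\ref{characterization of prim ideal of O_G,Lambda}, is equivalent to weak containment of the target irreducible representation in the given family. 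I will treat the two implications of each equivalence separately: a combinatorial \emph{necessity} half detecting failure of containment, and an analytic \emph{sufficiency} half establishing weak containment.

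For all four \emph{only if} directions the first tool is the vertex criterion $s_v\in I_{T,f}\iff v\notin T$. If the relevant vertex-set containment fails, say some $v$ lies in the target tail but in no approximating tail, then $s_v$ lies in every member of $F$, hence in $\bigcap_{J\in F}J$, but $s_v\notin I_{T_0,f_0}$; this breaks the containment and hence the closure. This settles necessity in (\ref{S in W})--(\ref{S in Y}) and the vertex part of (\ref{T in Y}). The new input is the \emph{character} obstruction in (\ref{T in Y}): when $T_0\in Y$ but $f_0\notin\overline{D(T_0)}$ (or when $T_0\notin Y$ with no strictly larger tail, where the period group of $T_0$ is not matched by any member of $Y$), I use the generator description in hypothesis (4) to locate, inside the subquotient attached to the cycline pairs of $(G,H_{T_0}\Lambda T_0)$, a copy of $C(\widehat{\Per}_{G,\Lambda T_0})$ on which $I_{T_0,f}$ restricts to evaluation at $f$. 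Since $f_0\notin\overline{D(T_0)}$ and $\widehat{\Per}_{G,\Lambda T_0}$ is compact Hausdorff, Urysohn's lemma provides a function vanishing on $D(T_0)$ but not at $f_0$; realized (and localized to the $T_0$-part) as an element of $\mathcal{O}_{G,\Lambda}$, it lies in $\bigcap_{J\in F}J\setminus I_{T_0,f_0}$.

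The \emph{if} directions rest on a confinement lemma and a realization-of-phases argument. The confinement lemma, immediate from maximal-tail axiom (i), states that every finite initial window of a cofinal path of the smaller tail lies inside a single covering tail: once the endpoint of a window lands in $T$, axiom (i) drags the whole window into $T$. When the \emph{target} is aperiodic (statements (\ref{S in W}) and (\ref{S in Y})), this suffices: an aperiodic tail has no nontrivial cycline pairs, so its confined windows never meet the periodic structure of the covering tail, and there $\pi_{\widetilde{f},x}$ carries trivial phase and no periodic collapse, agreeing with $\pi_{S_0}$; hence anything killed by the whole family is killed by $\pi_{S_0}$. When the target is periodic with a nontrivial character (statements (\ref{T in W}) and (\ref{T in Y}) case A), the approximants do not satisfy the target's cycline relations, and I instead use a translation-to-infinity argument: along a cofinal path of the covering tail the shifted vectors $\delta_{\sigma^n(x)}$ form a Weyl-type sequence realizing any prescribed phase $f_0(p)$ as an approximate eigenvalue, exhibiting $\pi_{\widetilde{f_0},x}$ as a weak limit of the family. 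In case A of (\ref{T in Y}) the strictness $T_0\subsetneq T$ is exactly what enlarges the available period group enough to tune the limiting phase to $f_0$; in case B, where $T_0\in Y$, I note instead that $\{f:a\in I_{T_0,f}\}$ is closed for each $a$ (by continuity of $\widetilde{f}(d(\mu))$ in $f$ in the formula for $\pi_{\widetilde{f},x}$, together with contractivity), so $f_0\in\overline{D(T_0)}$ already forces $\bigcap_{f\in D(T_0)}I_{T_0,f}\subseteq I_{T_0,f_0}$.

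I expect the main obstacle to be the sufficiency half of (\ref{T in Y}) in the strict-containment case, where one must realize a periodic representation of the \emph{smaller} tail with a prescribed character $f_0$ as a weak limit of representations of the larger tail carrying possibly unrelated characters. Forcing the translation-to-infinity vectors to converge in matrix coefficient against \emph{all} spanning elements $s_\mu u_g s_\nu^*$ simultaneously, and checking that the attainable limiting phases fill out $\widehat{\Per}_{G,\Lambda T_0}$ precisely because $T_0\subsetneq T$, is the delicate point; the self-similar bookkeeping (the unitaries $u_g$ and the restrictions $g\vert_\mu$) must be carried through every estimate, which is where pseudo-freeness and hypotheses (\ref{sss})--(3) are used to keep the underlying path combinatorics well defined. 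A secondary difficulty is the localization step in the necessity half of (\ref{T in Y}): one must ensure the separating element built from the cycline subalgebra of $T_0$ actually lies in $I_{T,f}$ for every other $T\in Y$, which uses that no member of $Y$ strictly contains $T_0$ to prevent those representations from detecting $T_0$'s periodicity.
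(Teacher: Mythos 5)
Your reduction to hull--kernel containments and your use of the vertex criterion $s_v\in I_{T,f}\iff v\notin T$ for the necessity halves match the paper, but the sufficiency halves and the character part of (\ref{T in Y}) go off in a direction that either fails or leaves the hard step unresolved. The decisive fact you are missing is that hypothesis (4) gives an explicit generating set for $I_{T,f}$, and that when $T_0\subsetneq T$ one has $H_T\cap T_0=\emptyset$, so \emph{every} generator of $I_{T,f}$ (the $s_v$ with $v\notin T$ and the cycline differences $s_\mu-f(\cdots)s_\nu$, which are supported on $H_T$) already lies in the vertex ideal $I(\Lambda^0\setminus T_0)\subset I_{T_0,f_0}$. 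This makes the backward directions of (\ref{T in W}) and of Case~A of (\ref{T in Y}) one-line ideal containments. Your ``translation-to-infinity / Weyl sequence'' program for these, which you yourself identify as the main obstacle, is not needed and is not a workable substitute: you would have to verify weak containment against all spanning elements with no mechanism for controlling the $u_g$'s, whereas the generator description settles it algebraically. Similarly, for the backward direction of (\ref{S in Y}) your ``confinement lemma'' does not do the work: the actual argument passes to the quotient $\mathcal{O}_{G,\Lambda S_0}$, shows by an explicit norm estimate in a representation $\pi_{\widetilde{f_1},x}$ (choosing a path $\xi$ of degree exceeding all degrees appearing in an approximant of the correction term $a$) that no $\pi(s_v)$, $v\in S_0$, lies in the image of $\bigcap I_{T,f}$, and then invokes the Cuntz--Krieger uniqueness theorem for the aperiodic tail $S_0$ to conclude that the image is zero. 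Without the uniqueness theorem, ``no periodic collapse, agreeing with $\pi_{S_0}$'' is not an argument.

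In the necessity half of (\ref{T in Y}) there are two further gaps. First, from $T_0\subset\bigcup_{T\in Y}T$ you must produce a \emph{single} $T\in Y$ with $T_0\subset T$; this uses $H_{T_0}\subset\bigcup_{T\in Y}T$ together with hypothesis (3) (strong connectedness of $H_{T_0}\Lambda T_0$), a step you skip entirely. Second, in the case where no $T\neq T_0$ in $Y$ contains $T_0$, the paper does not build a Urysohn-type separating element; it uses that the primitive ideal $I_{T_0,f_0}$ is prime (\cite[Proposition~A.17]{RW98}) to split $\bigcap_{T\in Y,f\in D(T)}I_{T,f}\subset I_{T_0,f_0}$ into either $\bigcap_{f\in D(T_0)}I_{T_0,f}\subset I_{T_0,f_0}$ or $\bigcap_{T_0\neq T}I_{T,f}\subset I_{T_0,f_0}$, rules out the latter by the vertex criterion, and then concludes $f_0\in\overline{D(T_0)}$. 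Your alternative requires the separating element, built from the cycline subalgebra over $H_{T_0}$, to lie in $I_{T,f}$ for every other $T\in Y$; you flag this but do not resolve it, and it is not automatic when $H_{T_0}\cap T\neq\emptyset$. The primality argument is what makes this case close, and it is absent from your proposal.
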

\begin{proof}
(\ref{S in W}). By \cite[Definition~A.19]{RW98}, $I_{S_0,1} \in \overline{\{I_{S,1}:S \in W\}}$ if and only if $\bigcap_{S \in W}I_{S,1}\subset I_{S_0,1}$. By \cite[Corollary~5.6]{LY19}, $I_{S_0,1}=I(\Lambda^0 \setminus S_0)$ which is the closed two-sided ideal of $\mathcal{O}_{G,\Lambda}$ generated by $\{s_v\}_{v \notin S_0}$. By \cite[Lemma~4.3, Theorem~4.5]{LY19}, $\bigcap_{S \in W}I_{S,1}=I(\Lambda^0 \setminus \bigcup_{S \in W}S)$. So $\bigcap_{S \in W}I_{S,1}\subset I_{S_0,1}$ if and only if $S_0 \subset \bigcup_{S \in W}S$.

(\ref{T in W}). $I_{T_0,f_0} \in \overline{\{I_{S,1}:S \in W\}}$ if and only if $I(\Lambda^0 \setminus \bigcup_{S \in W}S) \subset I_{T_0,f_0}$. Suppose that $I(\Lambda^0 \setminus \bigcup_{S \in W}S) \subset I_{T_0,f_0}$. By Theorem~\ref{characterization of prim ideal of O_G,Lambda}, $T_0 \subset \bigcup_{S \in W}S$. Conversely, suppose that $T_0 \subset \bigcup_{S \in W}S$. Then $I(\Lambda^0 \setminus \bigcup_{S \in W}S) \subset I(\Lambda^0 \setminus T_0)$. By the fourth assumption of this theorem $I(\Lambda^0 \setminus \bigcup_{S \in W}S) \subset I_{T_0,f_0}$.

(\ref{S in Y}). $I_{S_0,1} \in \overline{\{I_{T,f}:T \in Y,f \in D(T)\}}$ if and only if $\bigcap_{T \in Y,f \in D(T)}I_{T,f} \subset I_{S_0,1}$.

Suppose that $\bigcap_{T \in Y,f \in D(T)}I_{T,f} \subset I_{S_0,1}$. By the fourth assumption of this theorem $I(\Lambda^0 \setminus \bigcup_{T \in Y}T)=\bigcap_{T \in Y}I(\Lambda^0 \setminus T)\subset I_{S_0,1}$. By Theorem~\ref{characterization of prim ideal of O_G,Lambda}, $\Lambda^0 \setminus \bigcup_{T \in Y}T \subset \Lambda^0 \setminus S_0$. So $S_0 \subset  \bigcup_{T \in Y}T$.

Conversely, suppose that $S_0 \subset \bigcup_{T \in Y}T$. Denote by $\pi:\mathcal{O}_{G,\Lambda} \to \mathcal{O}_{G,\Lambda}/I(\Lambda^0$ $\setminus S_0) \cong \mathcal{O}_{G,\Lambda S_0}$ the quotient map. In order to show that $\bigcap_{T \in Y,f \in D(T)}I_{T,f} \subset I_{S_0,1}$, we only need to prove that $\pi(\bigcap_{T \in Y,f \in D(T)}I_{T,f})=0$. Fix $v_0 \in S_0$. Suppose that $\pi(s_{v_0}) \in \pi(\bigcap_{T \in Y,f \in D(T)}I_{T,f})$, for a contradiction. Then there exists $a \in I(\Lambda^0 \setminus S_0)$ such that $s_{v_0} -a \in \bigcap_{T \in Y,f \in D(T)}I_{T,f}$. So there exist $\{\mu_i,\nu_i\}_{i=1}^{n} \subset \Lambda (\Lambda^0 \setminus S_0)$ and $\{g_i\}_{i=1}^{n} \subset G$ such that $\Vert a-\sum_{i=1}^{n}s_{\mu_i} u_{g_i} s_{\nu_i}^*\Vert<1/2$. Let $p:=\sum_{i=1}^{n}(d(\mu_i)+d(\nu_i))+\sum_{i=1}^{k}e_i$. Since $v_0 \in S_0$ and $S_0$ is a maximal tail, there exists $\xi \in v_0 \Lambda^p S_0$. Since $S_0 \subset \bigcup_{T \in Y}T$, there exist $T_1 \in Y,f_1 \in D(T_1)$ such that $s(\xi) \in T_1$. Fix an arbitrary cofinal infinite path $x$ in $\Lambda T_1$ and fix $\widetilde{f_1} \in \widehat{\mathbb{Z}}^k$ extending $f_1$. Then there exists $q \in \mathbb{N}^k$ such that $\eta \in s(\xi) \Lambda x(q,q)$. We calculate that
\begin{align*}
\Vert \pi_{\widetilde{f_1},x}(s_{v_0}-a) \Vert&=\Vert \pi_{\widetilde{f_1},x}(s_{v_0}-\sum_{i=1}^{n}s_{\mu_i} u_{g_i} s_{\nu_i}^*)+\pi_{\widetilde{f_1},x}(\sum_{i=1}^{n}s_{\mu_i} u_{g_i} s_{\nu_i}^*-a) \Vert
\\&\geq \Vert \pi_{\widetilde{f_1},x}(s_{v_0}-\sum_{i=1}^{n}s_{\mu_i} u_{g_i} s_{\nu_i}^*)\Vert-\Vert\pi_{\widetilde{f_1},x}(\sum_{i=1}^{n}s_{\mu_i} u_{g_i} s_{\nu_i}^*-a) \Vert
\\&>\Vert \pi_{\widetilde{f_1},x}(s_{v_0}-\sum_{i=1}^{n}s_{\mu_i} u_{g_i} s_{\nu_i}^*)\Vert-1/2
\\&\geq \Vert \pi_{\widetilde{f_1},x}(s_{v_0}-\sum_{i=1}^{n}s_{\mu_i} u_{g_i} s_{\nu_i}^*)\delta_{\xi\eta \sigma^q(x)}\Vert-1/2
\\&=\Vert \delta_{\xi\eta \sigma^q(x)}\Vert-1/2
\\&=1/2.
\end{align*}
On the other hand, $s_{v_0}-a \in I_{T_1,f_1}$ because $s_{v_0}-a \in \bigcap_{T \in Y,f \in D(T)}I_{T,f}$. So $\pi_{\widetilde{f_1},x}(s_{v_0}-a)=0$, which is a contradiction. Hence $\pi(s_{v_0}) \notin \pi(\bigcap_{T \in Y,f \in D(T)}$ $I_{T,f})$. Denote by $q:\mathcal{O}_{G,\Lambda S_0} \to \mathcal{O}_{G,\Lambda S_0}/\pi(\bigcap_{T \in Y,f \in D(T)}I_{T,f})$ the quotient map. Then $\pi(s_v)+\pi(\bigcap_{T \in Y,f \in D(T)}I_{T,f})\neq \pi(\bigcap_{T \in Y,f \in D(T)}I_{T,f})$ for all $v \in S_0$. Since $S_0 \in M_\gamma(\Lambda)$, by \cite[Theorem~3.19]{LY19} $q$ is injective. So $\pi(\bigcap_{T \in Y,f \in D(T)}I_{T,f})=0$.

(\ref{T in Y}). $I_{T_0,f_0}$ is in the closure of $\{I_{T,f}:T \in Y,f \in D(T)\}$ if and only if $\bigcap_{T \in Y,f \in D(T)}I_{T,f} \subset I_{T_0,f_0}$.

Suppose that $\bigcap_{T \in Y,f \in D(T)}I_{T,f} \subset I_{T_0,f_0}$. By the fourth assumption of this theorem, $I(\Lambda^0 \setminus \bigcup_{T \in Y}T)=\bigcap_{T \in Y}I(\Lambda^0 \setminus T)\subset I_{T_0,f_0}$. By Theorem~\ref{characterization of prim ideal of O_G,Lambda}, $T_0 \subset \bigcup_{T \in Y}T$. So $H_{T_0} \subset \bigcup_{T \in Y}T$. We deduce that there exists $T \in Y$ such that $T_0 \subset T$. Now we split into two cases.

Case 1. There exists $T_0 \neq T \in Y$ such that $T_0 \subset T$.

Case 2. There exist no $T_0 \neq T \in Y$ such that $T_0 \subset T$. Then $T_0 \in Y$. By \cite[Proposition~A.17]{RW98}, either $\bigcap_{f \in D(T_0)}I_{T_0,f} \subset I_{T_0,f_0}$ or $\bigcap_{T_0 \neq T \in Y,f \in D(T)}I_{T,f} \subset I_{T_0,f_0}$. Suppose that $\bigcap_{T_0 \neq T \in Y,f \in D(T)}I_{T,f} \subset I_{T_0,f_0}$, for a contradiction. By the fourth assumption of this theorem, $I(\Lambda^0 \setminus \bigcup_{T_0 \neq T \in Y}T)\subset I_{T_0,f_0}$. By Theorem~\ref{characterization of prim ideal of O_G,Lambda}, $T_0 \subset \bigcup_{T_0 \neq T \in Y}T$. So $H_{T_0} \subset \bigcup_{T_0 \neq T \in Y}T$. We deduce that there exists $T_0 \neq T \in Y$ such that $T_0 \subset T$, which is a contradiction. Hence $\bigcap_{f \in D(T_0)}I_{T_0,f} \subset I_{T_0,f_0}$. By \cite[Corollary~5.6]{LY19}, $f_0 \in \overline{D(T_0)}$.

Conversely, suppose that there exists $T_0 \neq T \in Y$ such that $T_0 \subset T$. Then $H_T \cap T_0 =\emptyset$. So $\Lambda^0 \setminus T, H_T \subset \Lambda^0 \setminus T_0$. By the fourth assumption of this theorem, $I_{T,f} \subset I_{T_0,f_0}$.

Finally, suppose that there exist no $T_0 \neq T \in Y$ such that $T_0 \subset T$, $T_0 \in Y$, $f_0 \in \overline{D(T_0)}$. By \cite[Corollary~5.6]{LY19}, $\bigcap_{f \in D(T_0)}I_{T_0,f} \subset I_{T_0,f_0}$. So $\bigcap_{T \in Y,f \in D(T)}I_{T,f} \subset I_{T_0,f_0}$.
\end{proof}

\begin{prop}\label{prim ideal 1-graph}
Let $(G,\Lambda)$ be a pseudo free self-similar $1$-graph satisfying the first two conditions of Theorem~\ref{Jacob top}. Then the third and the fourth assumptions of Theorem~\ref{Jacob top} automatically hold.
\end{prop}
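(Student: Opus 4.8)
The plan is to treat the two conditions separately, with the combinatorial heart lying in the third condition and the fourth condition then reducing to the ideal descriptions of \cite{LY19} together with a short direct computation. Throughout I work inside $\mathcal{O}_{G,\Lambda T}$ via the isomorphism $\mathcal{O}_{G,\Lambda}/I(\Lambda^0\setminus T)\cong\mathcal{O}_{G,\Lambda T}$ used in the proof of Theorem~\ref{Jacob top}, and I use that, by the two standing hypotheses of the proposition together with part~(iii) of the Proposition preceding Theorem~\ref{characterization of prim ideal of O_G,Lambda}, $\Per_{\Lambda T}$ is a subgroup of $\mathbb{Z}$; for $T\in M_\tau(\Lambda)$ it is nonzero, say $\Per_{\Lambda T}=n\mathbb{Z}$ with $n\geq 1$.

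For the third condition, fix $T\in M_\tau(\Lambda)$ and $v\in H_T$, and apply the defining property of $H_T$ (part~(i) of the Proposition preceding Theorem~\ref{characterization of prim ideal of O_G,Lambda}) with $p=n$ and $q=0$: every $\mu\in v\Lambda^nT$ has $v$ as its unique cycline partner, so $(\mu,1_G,v)$ is a cycline pair. Two such loops $\mu_1,\mu_2$ satisfy $\mu_1 y=y=\mu_2 y$ for all $y\in v(\Lambda T)^\infty$, hence $\mu_1=\mu_2$; thus $\vert v\Lambda^nT\vert=1$. Since the maximal-tail axioms guarantee that every edge of $\Lambda T$ out of $v$ extends to a path of length $n$ in $\Lambda T$, this forces $\vert v\Lambda^1T\vert=1$, i.e.\ $v$ has out-degree one in $\Lambda T$. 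As $H_T$ is hereditary, the unique forward ray from $v$ stays in $H_T$, and being the periodic path $\mu^\infty$ it returns to $v$; hence $v$ lies on a cycle $C_v\subseteq H_T$ and every vertex reachable from $v$ in $\Lambda T$ lies on $C_v$. Finally, downward-directedness of the maximal tail $T$ yields, for $v,w\in H_T$, a vertex $z\in T$ with $v\Lambda z,w\Lambda z\neq\emptyset$; closedness of $T$ under predecessors places the connecting paths in $\Lambda T$, so $z\in C_v\cap C_w$ and therefore $C_v=C_w$. Thus the vertices $H_T$ form a single cycle, and $H_T\Lambda T$ is strongly connected.

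For the fourth condition, write $J$ for the proposed ideal and note first that $J\subseteq I_{T,f}$: its first family of generators lies in $I_{T,f}$ because $s_v\in I_{T,f}\iff v\notin T$ by Theorem~\ref{characterization of prim ideal of O_G,Lambda}, while for a cycline pair $(\mu,1_G,\nu)$ of $(G,H_T\Lambda T)$, applying the formula for $\pi_{\widetilde{f},x}$ and using that $\widetilde{f}$ is a character of $\mathbb{Z}^k$ extending $f$ with $\mu y=\nu y$ gives $\pi_{\widetilde{f},x}(s_\mu-f(d(\mu)-d(\nu))s_\nu)=0$. When $T\in M_\gamma(\Lambda)$ the reverse inclusion is immediate: then $\Per_{\Lambda T}=0$, so every cycline pair of $(G,H_T\Lambda T)$ has $d(\mu)=d(\nu)$, whence comparing the first $d(\mu)$ edges of $\mu y=\nu y$ forces $\mu=\nu$ and the second family of generators vanishes, leaving $J=I(\Lambda^0\setminus T)=I_{T,1}$ by \cite[Corollary~5.6]{LY19}.

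It remains to prove the reverse inclusion when $T\in M_\tau(\Lambda)$, and this is where the work of the third condition is spent. Having shown that $H_T\Lambda T$ is strongly connected, the hypotheses under which \cite{LY19} describes the primitive ideals of a self-similar $k$-graph C*-algebra are now all met for $(G,H_T\Lambda T)$, so the generator description of $I_{T,f}$ asserted in the fourth condition follows from \cite[Corollary~5.6]{LY19}; equivalently, one shows directly that the induced representation $\mathcal{O}_{G,\Lambda T}/\overline{J}\to B(\ell^2[x])$ is faithful, the cycline relations having collapsed the commutative cycline subalgebra onto the character $f$ so that no proper ideal of the quotient survives. The main obstacle is precisely this passage from strong connectivity to the faithfulness/generator statement: one must verify that every hypothesis of the relevant result of \cite{LY19} genuinely transfers to $(G,H_T\Lambda T)$ and that fixing the character of $\Per_{\Lambda T}$ removes every ideal of the quotient other than the image of $I_{T,f}$, which is where the explicit single-cycle structure of $H_T\Lambda T$ established above and the pseudo-freeness of the $G$-action must be used in full.
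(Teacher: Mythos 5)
Your argument for the third condition is correct and in fact more self-contained than the paper's, which simply cites Hong--Szyma\'{n}ski \cite{HS04}: applying the defining property of $H_T$ with $(p,q)=(n,0)$ to get $\vert v\Lambda^n T\vert=1$, hence out-degree one, hence that $H_T$ is the vertex set of a single entrance-free cycle, is a clean direct proof. Your treatment of the easy half of the fourth condition (the inclusion $J\subseteq I_{T,f}$ via $\pi_{\widetilde f,x}$, and the case $T\in M_\gamma(\Lambda)$) is also fine.

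The gap is exactly where you flag it: the inclusion $I_{T,f}\subseteq J$ for $T\in M_\tau(\Lambda)$ is the substantive content of the proposition, and you do not prove it. It cannot simply follow from \cite[Corollary~5.6]{LY19}: if that corollary already delivered the generator description of $I_{T,f}$, the fourth hypothesis of Theorem~\ref{Jacob top} would be automatic for every $k$ and there would be nothing for Proposition~\ref{prim ideal 1-graph} to establish. The restriction to $k=1$ must enter in an essential way that your sketch never touches. The paper closes this step with the Hong--Szyma\'{n}ski tail-adding device: in the quotient by $J_{T,f}$ the cycline relations force $s_{e_0}$ (the edge of the entrance-free cycle) to equal a scalar multiple of a vertex projection, so one may delete $e_0$, attach an infinite head at $v_0$ to obtain a new pseudo-free self-similar $1$-graph $(G,E)$ that is \emph{aperiodic}, identify the resulting algebra $\mathcal{A}$ with the full corner $P\mathcal{O}_{G,E}P$, and then invoke the uniqueness theorem for the aperiodic algebra $\mathcal{O}_{G,E}$ to conclude that the map $q:\mathcal{O}_{G,\Lambda T}/J_{T,f}\to\mathcal{O}_{G,\Lambda T}/I_{T,f}$ is injective. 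Some construction of this kind (or an equivalent direct faithfulness argument for the induced representation on the quotient by $J$) is required; observing that $H_T\Lambda T$ is strongly connected and gesturing at \cite{LY19} does not close the argument, as your own final sentence concedes.
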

\begin{proof}
Pointed on \cite[Page~49]{HS04}, for any $T \in M_\tau(\Lambda),H_T$ is the vertices of the unique cycle without entrances of $\Lambda T$. So $H_T \Lambda T$ is the unique cycle without entrances of $\Lambda T$, which is strongly connected.

Now we verify the fourth assumption of Theorem~\ref{Jacob top} holds. Fix $T \in M_\tau(\Lambda)$, fix $f \in \widehat{\Per}_{G,\Lambda T}$. We consider the quotient $\mathcal{O}_{G,\Lambda T} \cong \mathcal{O}_{G,\Lambda}/I(\Lambda^0 \setminus T)$ and we regard $I_{T,f},I(H_T)$ as the closed two-sided ideals of $\mathcal{O}_{G,\Lambda T}$. It suffices to show that $I_{T,f}$ is the closed two-sided ideal of $\mathcal{O}_{G,\Lambda T}$ generated by $\{s_{\mu}-f(d(\mu)-d(\nu))s_\nu:(\mu,1_G,\nu) \text{ is a cycline pair of }(G,H_{T}\Lambda T)\}$. Denote by $J_{T,f}$ the closed two-sided ideal of $\mathcal{O}_{G,\Lambda T}$ generated by $\{s_{\mu}-f(d(\mu)-d(\nu))s_\nu:(\mu,1_G,\nu) \text{ is a cycline pair of }(G,H_{T}\Lambda T)\}$. As in the proof of \cite[Proposition~5.4]{LY19}, $J_{T,f} \subset I_{T,f}$. Denote by $q: \mathcal{O}_{G,\Lambda T}/J_{T,f} \to \mathcal{O}_{G,\Lambda T}/I_{T,f}$ the quotient map. We may assume that $H_T =\{v_0\}$ and that $H_T \Lambda^1 T=\{e_0\}$ (This is to simplify the notation, when $H_T$ has multiple vertices then the proof shares the similar argument).

Define $\mathcal{A}^\dagger$ to be the universal C*-algebra generated by a family of partial isometries $\{s_e\}_{e \in \Lambda^1 T \setminus \{e_0\}}$, a family of mutually orthogonal projections $\{s_v\}_{v \in T}$, and a family of unitaries $\{u_g\}_{g \in G}$, satisfying
\begin{enumerate}
\item $s_e^*s_e=s_{s(e)}$ for all $ e \in \Lambda^1 T \setminus \{e_0\}$;
\item $s_v=\sum_{e \in \Lambda^1 T}s_e s_e^*$ for all $v_0 \neq v \in T$;
\item $u_{gh}=u_g u_h$ for all $g,h \in G$;
\item $u_g s_e=s_{g \cdot e} u_{g \vert_e}$ for all $g \in G,e \in \Lambda^1 T \setminus \{e_0\}$;
\item $u_g s_v=s_v u_g$ for all $g \in G,v\in T$.
\end{enumerate}
Define $\mathcal{A}:=\overline{\spn}\{s_\mu u_g s_\nu^*:\mu,\nu \in \Lambda T \setminus \Lambda e_0,g \in G\}$. It is straightforward to see that there exists a natural surjective homomorphism $\rho:\mathcal{A} \to  \mathcal{O}_{G,\Lambda T}/J_{T,f}$.

Define $E^0:=T \amalg \{v_n\}_{n=1}^{\infty}$; define $E^1:=\Lambda^1 T \setminus \{e_0\} \amalg \{e_n\}_{n=1}^{\infty}$; define $r_E:E^1 \to E^0$ by $r_E \vert_{\Lambda^1}:=r$ and by $r_E(e_n):=v_{n-1}$ for all $n \geq 1$; and define $s_E:E^1 \to E^0$ by $s_E \vert_{\Lambda^1}:=s$ and by $s_E(e_n):=v_{n}$ for all $n \geq 1$. For $g \in G, n \geq 1$, define $g \cdot e_n:=e_n$; define $g \cdot v_n:=v_n$; define $g \vert_{e_n}:=g$; and define $g \vert_{v_n}:=g$. By \cite[Proposition~2.1]{EP17}, $(G,E)$ is a pseudo-free self-similar $1$-graph. Let $P:=\sum_{v \in T}s_v$ which is a projection of $M(\mathcal{O}_{G,E})$. It is easy to check that $P \mathcal{O}_{G,E} P$ is a full corner of $\mathcal{O}_{G,E}$. By \cite[Theorem~3.2]{EPS18}, there exists a natural isomorphism $\iota:\mathcal{A} \to P \mathcal{O}_{G,E} P$.

Take an arbitrary nondegenerate injective representation $\pi:\mathcal{O}_{G,\Lambda T}/I_{T,f} \to B(H)$. Define $\varphi:=\pi \circ q \circ \rho \circ \iota^{-1}$. Since $(G,E)$ is aperiodic, by \cite[Proposition~2.66]{RW98} and by \cite[Theorem~3.19]{LY19} the induced representation $Ind-\varphi$ is injective on $ \mathcal{O}_{G,E} $. So $\varphi$ is injective. Hence $q$ is injective. Therefore $I_{T,f}=J_{T,f}$.
\end{proof}

\section*{Acknowledgments}

The author wants to thank Professor Toke Carlsen and Professor Aidan Sims for valuable discussions on both Theorem~\ref{Jacob top} and Proposition~\ref{prim ideal 1-graph}. The author especially wants to thank Professor Aidan Sims for numerous patient email correspondences.

\end{document}